\DeclarePairedDelimiter{\ceil}{\lceil}{\rceil}
\newtheorem{theorem}{Theorem}[section]
\newtheorem{lemma}[theorem]{Lemma}
\theoremstyle{definition}
\theoremstyle{remark}
\title{Uniqueness of a Median of a Binomial Distribution with Rational Probability}
\author{
  Szymon Nowakowski
    \\
  AI Investments\\
  \texttt{szymon.nowakowski@aiinvestments.pl} \\
}
\begin{document}
\maketitle

\begin{abstract}
In this paper we show that the median of the binomial distribution $B(n,p)$ is unique for all rational $p$, with the only exception of $p=\frac{1}{2}$ and $n$ odd.
\end{abstract}

\keywords{Binomial distribution \and Median \and Median uniqueness}

\section{Introduction}
The tail estimates of probabilistic distributions have been the very active research frontier in theoretic (large deviation theory) and applied sciences, both. Not so much work has been devoted to medians, and the existing results seem to be either not that well known or misunderstood. As the example, the famous mean, median and mode inequality theorems \cite{Runnenburg1978-zw,Van_Zwet1979-rv,Dharmadhikari1983-el,Abdous1998-st} are a relatively new achievement. The discrete case \cite{Abdous1998-st} is the work as recent as the turn of the centuries. Those theorems establish the order of the mean, medians and modes under certain conditions. Nevertheless, there seems to be a widespread belief that the order holds universally, and there exist papers devoted to correcting that belief \cite{Abadir2005-bt,Von_Hippel2005-of}. 

Recently, there have been papers that establish the bounds on a distance of a median to the mean in the case of a binomial distribution (\cite{Kaas1980-fn}), a binomial and negative binomial distribution (\cite{Gob1994-sv}) and in the case of a binomial and Poisson distributions (\cite{Hamza1995-mb}). The first of those papers introduces the notion of a \emph{weak} median (a non-unique median) and the \emph{strong} (unique) median. In the same paper sufficient conditions are given for the median to be unique in the binomial distribution. The sufficient conditions are based on a distance between the median candidate and the mean of the binomial distribution.

In this paper we state another sufficient condition of a completely different nature than before (not based on a distance between median candidates and the mean): we show that the median of the binomial distribution $B(n,p)$ is unique (strong in the notation of \cite{Kaas1980-fn}) for all rational $p$, with the only exception of $p=\frac{1}{2}$ and $n$ odd. 

The main motivation of this paper comes from considering an approximation of a hypergeometric distribution with a binomial distribution, with $p$ being a ratio between certain integer parameters of the hypergeometric distribution being approximated, thus $p$ being rational. 
In toy problems such as urn problems (drawing with replacement), $p$ is a ratio between integers (numbers of balls). In practical statistical inference, $p$ is frequently a ratio between integer counts, too. In those cases one may apply the results of this paper: that there is the unique median guaranteed to exist (with the only exception of $p=\frac{1}{2}$ and $n$ odd). 

\section{Notation}
\label{sec:notation}
Binomial distribution $B(n,p)$ with parameters $n \in \mathbb{N}$ and $0 \leq p \leq 1$ is defined with the use of its discrete probability density function $b(\cdot,n,p)$
\begin{equation}
b(k,n,p) = \binom{n}{k}p^k (1-p)^{n-k}\text{, for } k \in \mathbb{N},\,0 \leq k \leq n
\end{equation}
Its cumulative distribution function will be denoted $B(\cdot,n,p)$, i.e.
\begin{equation}
    B(k,n,p) = \sum_{i=0}^k b(i,n,p)\text{, for } k \in \mathbb{N},\,0 \leq k \leq n
\end{equation}
We call a discrete random variable $X$ binomial with a distribution $B(n,p)$ iif 
\begin{equation}
    P(X \leq k) = B(k,n,p)\text{, for all } k \in \mathbb{N},\,0 \leq k \leq n
\end{equation}
We say that $m \in \mathbb{R}$ is a median of a discretely distributed random variable $X$ iif 
\begin{equation}\label{eq:median_def}
    P(X \leq m) \geq \frac{1}{2}\text{ and }P(X \geq m) \geq \frac{1}{2}
\end{equation}
Following \cite{Kaas1980-fn} we make an additional distinction here: we call $m\in \mathbb{R}$ the unique median iif 
\begin{equation}\label{eq:strong_median_def}
    P(X \leq m) > \frac{1}{2}\text{ and }P(X \geq m) > \frac{1}{2}
\end{equation}
Please note, that we do not need to restrict the last two definitions to binomial distributions only.

\section{Prerequisites}
Before we move to examine the main result of this paper, let us state a few facts to lay the foundation of our understanding of the medians in the discrete distributions.
\begin{lemma}\label{lemma:unique_median_in_support}
If $m \in \mathbb{R}$ is the unique median of a discrete random variable $X$, then $m$ belongs to the support of $X$.
\end{lemma}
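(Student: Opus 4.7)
The plan is a short proof by contradiction. Suppose $m \in \mathbb{R}$ satisfies the unique median condition (\ref{eq:strong_median_def}) but does not belong to the support of $X$. Then by definition of support for a discrete random variable, $P(X = m) = 0$.

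Next, I would use the elementary decomposition
\begin{equation}
P(X \leq m) + P(X \geq m) = P(X < m) + P(X = m) + P(X > m) + P(X = m) = 1 + P(X = m).
\end{equation}
Under the assumption $P(X = m) = 0$, this gives $P(X \leq m) + P(X \geq m) = 1$.

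Finally, I would invoke the uniqueness assumption: both $P(X \leq m) > \frac{1}{2}$ and $P(X \geq m) > \frac{1}{2}$ must hold simultaneously, so their sum is strictly greater than $1$, contradicting the equality just derived. Hence $P(X = m) > 0$, i.e.\ $m$ lies in the support of $X$.

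There is essentially no obstacle here: the argument is a one-line contradiction, and the only subtlety is being careful that the identity $P(X \leq m) + P(X \geq m) = 1 + P(X = m)$ holds for any real $m$ (not merely integer values), which follows directly from the disjoint partition of $\mathbb{R}$ into $\{X < m\}$, $\{X = m\}$, $\{X > m\}$.
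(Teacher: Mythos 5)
Your proof is correct and is essentially the paper's own argument in a contrapositive wrapper: both reduce to showing $P(X=m)>0$ from the two strict inequalities, the paper directly via $P(X=m)=P(X\geq m)-P(X>m)>\frac{1}{2}-\frac{1}{2}=0$, you via the equivalent identity $P(X\leq m)+P(X\geq m)=1+P(X=m)$. No gap; nothing further is needed.
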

\begin{proof}
From the definition of the strong median (\ref{eq:strong_median_def}) we have
\begin{equation*}
    P(X > m) = 1- P(X \leq m) < \frac{1}{2}\text{ and }P(X \geq m) > \frac{1}{2}
\end{equation*}
and thus $P(X = m) >0$.

\end{proof}

\begin{lemma}\label{lemma:median_iif_1_2_condition}
If $m \in \mathbb{R}$, $X$ is a discrete random variable then
\begin{equation}\label{eq:one_half}
    P(X \leq m)= \frac{1}{2}\text{ or }P(X \geq m) = \frac{1}{2}
\end{equation}
if and only if $m$ is a median but not the unique median of $X$.
\end{lemma}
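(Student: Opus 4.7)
The plan is to prove both directions directly from the definitions of median (\ref{eq:median_def}) and unique median (\ref{eq:strong_median_def}), since the statement is essentially a restatement of what fails when the strict inequalities in (\ref{eq:strong_median_def}) degrade to equalities in (\ref{eq:median_def}).

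For the ($\Rightarrow$) direction, I would assume without loss of generality that $P(X \leq m) = \frac{1}{2}$ (the other case being symmetric under replacing $X$ with $-X$, or handled analogously). I would then observe that $P(X > m) = 1 - P(X \leq m) = \frac{1}{2}$, so $P(X \geq m) \geq P(X > m) = \frac{1}{2}$. Together with the assumption $P(X \leq m) = \frac{1}{2} \geq \frac{1}{2}$, this shows $m$ satisfies (\ref{eq:median_def}), hence is a median. However, since $P(X \leq m) = \frac{1}{2}$ is not strictly greater than $\frac{1}{2}$, condition (\ref{eq:strong_median_def}) fails, so $m$ is not the unique median.

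For the ($\Leftarrow$) direction, suppose $m$ is a median but not the unique median. Then (\ref{eq:median_def}) holds, giving $P(X \leq m) \geq \frac{1}{2}$ and $P(X \geq m) \geq \frac{1}{2}$, but (\ref{eq:strong_median_def}) fails, meaning at least one of these two weak inequalities is not strict. Whichever inequality is not strict must then be an equality, yielding exactly (\ref{eq:one_half}).

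There is no real obstacle here; the statement is essentially unpacking the contrast between the two definitions. The only thing to be mildly careful about is in the forward direction, where one must not confuse $P(X > m)$ with $P(X \geq m)$ when passing from the complement of $\{X \leq m\}$ back to the condition required by (\ref{eq:median_def}).
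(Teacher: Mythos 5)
Your proof is correct and follows essentially the same route as the paper: the forward direction via $P(X \geq m) \geq P(X > m) = 1 - P(X \leq m) = \frac{1}{2}$ together with the observation that the equality defeats (\ref{eq:strong_median_def}), and the backward direction by noting that a failed strict inequality in (\ref{eq:strong_median_def}) combined with (\ref{eq:median_def}) forces an equality. No gaps; the symmetry/WLOG handling of the second case matches the paper's argument.
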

\begin{proof}
$(\implies)$:
Suppose that the first equation in (\ref{eq:one_half}) holds: we have
\begin{equation}
    P(X \geq m) \geq P(X > m)= 1-P(X \leq m) = \frac{1}{2}
\end{equation}
and thus $m$ is a median by (\ref{eq:median_def}).
By the symmetrical argument for the second equation in (\ref{eq:one_half}) we can show that $m$ satisfies the definition of a median (\ref{eq:median_def}) in this case too. Obviously, $m$ fails (\ref{eq:strong_median_def}) and thus it is not the unique median.

$(\impliedby)$:
Obviously, if (\ref{eq:median_def}) holds but any one of the two inequalities (\ref{eq:strong_median_def}) fails, then (\ref{eq:one_half}) must hold. 

This concludes the proof.
\end{proof}

\begin{lemma}\label{lemma:median_not_unique}
If $m\in \mathbb{R}$ is a median but not the unique median of a discrete random variable $X$ with its support being a closed set in $\mathbb{R}$, then there exist uniquely defined $m_1<m_2$, both in the support of $X$, such that $m \in [m_1, m_2]$ and all $m' \in [m_1, m_2]$ are medians. Additionally we have
\begin{equation}
    P(X \leq m_1) = P(X \geq m_2) = \frac{1}{2}
\end{equation}
\end{lemma}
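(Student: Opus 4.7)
My plan is to apply Lemma~\ref{lemma:median_iif_1_2_condition} to $m$, which yields either $P(X\le m)=\tfrac12$ or $P(X\ge m)=\tfrac12$; I will treat the first case and note that the second is handled symmetrically. In this case the natural candidates are $m_1:=\max\{x\in\mathrm{supp}(X):x\le m\}$ and $m_2:=\min\{x\in\mathrm{supp}(X):x>m_1\}$. Existence of the maximum follows from closedness of the support together with $P(X\le m)=\tfrac12>0$, which makes the underlying set non-empty and bounded above; existence of the minimum follows similarly from $P(X>m_1)=\tfrac12>0$ and closedness.

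I would then verify the conclusions in turn. Because $(m_1,m]$ contains no point of the support, $P(X\le m_1)=P(X\le m)=\tfrac12$; because $(m_1,m_2)$ contains no point of the support, $P(X\ge m_2)=P(X>m_1)=\tfrac12$. For any $m'\in[m_1,m_2]$, monotonicity of the CDF gives $P(X\le m')\ge P(X\le m_1)=\tfrac12$ and $P(X\ge m')\ge P(X\ge m_2)=\tfrac12$, so every such $m'$ is a median by~(\ref{eq:median_def}). We have $m_1\le m$ by construction, and if $m>m_1$ then $m<m_2$ (otherwise the support point $m_2$ would lie in $(m_1,m]$, violating the maximality of $m_1$), so $m\in[m_1,m_2]$.

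For uniqueness, suppose a pair $(m_1',m_2')$ also satisfies the stated conditions. If $m_1'\ne m_1$, say $m_1<m_1'$ (the other subcase is symmetric), then $P(X\le m_1)=P(X\le m_1')=\tfrac12$ forces no point of the support to lie in $(m_1,m_1']$, contradicting $m_1'\in\mathrm{supp}(X)$. The same argument applied to $m_2$ (using $P(X\ge m_2)=P(X\ge m_2')=\tfrac12$) forces $m_2'=m_2$.

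The main obstacle is organizational rather than mathematical: one must reconcile the two cases produced by Lemma~\ref{lemma:median_iif_1_2_condition} (both may in fact hold simultaneously, which happens precisely when $m\notin\mathrm{supp}(X)$) and invoke the closedness of the support at exactly the two moments where $m_1$ and $m_2$ are defined, so that the relevant supremum and infimum are attained inside the support rather than merely approached.
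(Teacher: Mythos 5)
Your construction is essentially the paper's own: invoke Lemma~\ref{lemma:median_iif_1_2_condition}, assume without loss of generality $P(X\le m)=\tfrac12$, let $m_1$ be the largest support point $\le m$ and $m_2$ the next support point above it, then verify the CDF identities and the median property on $[m_1,m_2]$ (your explicit uniqueness argument is in fact more careful than the paper's one-line assertion). The genuine gap is the existence of $m_2$: the set $\{x\in\mathrm{supp}(X):x>m_1\}$ is the intersection of a closed set with an \emph{open} half-line, hence need not be closed, and its infimum can equal $m_1$ and fail to be attained, so ``$P(X>m_1)=\tfrac12>0$ plus closedness of the support'' does not deliver the minimum. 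Concretely, take $P(X=0)=\tfrac12$ and $P(X=\tfrac1n)=2^{-n-1}$ for $n\ge1$: the support $\{0\}\cup\{\tfrac1n : n\ge 1\}$ is closed, and $m=0$ satisfies $P(X\le 0)=\tfrac12$, so it is a median but not the unique median in the sense of (\ref{eq:strong_median_def}); yet every candidate $m_2=\tfrac1N$ gives $P(X\ge \tfrac1N)=\tfrac12(1-2^{-N})<\tfrac12$, and indeed $0$ is the only median of $X$. So at this step no argument can succeed: the conclusion of the lemma itself fails when support points accumulate at $m_1$ from the right.

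In fairness, the paper's proof makes exactly the same leap (``which allows us to find $m_2>m$ \dots we use the property that the support of $X$ is a closed set''), so your attempt reproduces the intended argument, gap included. Both are repaired by strengthening the hypothesis to a support with no accumulation points (e.g.\ finite or lattice support), in which case $\mathrm{supp}(X)\cap(m_1,\infty)$ does attain its infimum and your verification goes through verbatim; this covers every use the paper makes of the lemma, since a binomial distribution has finite support. If you want your write-up to stand on its own, either add that hypothesis explicitly or flag precisely where the attainment of the minimum is being assumed.
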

\begin{proof}
To see that please note, that by the Lemma \ref{lemma:median_iif_1_2_condition} we have 
\begin{equation}
    P(X \leq m) = \frac{1}{2}\text{ or }P(X \geq m) = \frac{1}{2}
\end{equation}
Without losing generality assume that the first equality holds, the argument for the second equality is symmetrical. 

If $m$ is in the support of $X$ then let $m_1 = m$. Otherwise, if $m$ is not in the support of $X$, select $m_1<m$ from the support of $X$ for which 
$P(X \leq m_1) = \frac{1}{2}$ still holds. It is possible because the support of $X$ is a closed set in $\mathbb{R}$.
Either way, we have found $m_1 \leq m$ with 
$P(X \leq m_1) = \frac{1}{2}$. Such a point $m_1$ is uniquely defined.

To find $m_2$ observe, that we have
\begin{equation}
    P(X > m) = 1-P(X \leq m) = \frac{1}{2}
\end{equation}
which allows us to find $m_2 > m \geq m_1$, $m_2$ is from the support of $X$ with the property $P(X \geq m_2) = \frac{1}{2}$. Again, we use the property that the support of $X$ is a closed set in $\mathbb{R}$. Such a point $m_2$ is uniquely defined, too.

For all $m' \in [m_1, m_2]$ we have 
\begin{equation}
    P(X \leq m') \geq P(x \leq m_1)= \frac{1}{2}\text{ and }P(X \geq m') \geq P(x \geq m_2)= \frac{1}{2}
\end{equation}
Thus $[m_1, m_2]$ is the uniquely defined interval of medians of $X$.
\end{proof}

\begin{lemma}\label{lemma:unique_median_unique}
If $m_1, m_2 \in \mathbb{R}$ are the unique medians of a discrete random variable $X$, then $m_1=m_2$.
\end{lemma}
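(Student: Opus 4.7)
The plan is to prove this by contradiction: assume $m_1 \neq m_2$ and derive an impossibility by exploiting the strict inequalities in the definition of the unique median (\ref{eq:strong_median_def}).

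First I would, without loss of generality, assume $m_1 < m_2$. The key observation is that under this assumption the events $\{X \leq m_1\}$ and $\{X \geq m_2\}$ are disjoint, since any outcome in both would satisfy $m_2 \leq X \leq m_1$, contradicting $m_1 < m_2$. Therefore
\begin{equation*}
P(X \leq m_1) + P(X \geq m_2) \leq 1.
\end{equation*}

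Next I would apply the definition of the unique median (\ref{eq:strong_median_def}) to $m_1$, which gives $P(X \leq m_1) > \tfrac{1}{2}$, and to $m_2$, which gives $P(X \geq m_2) > \tfrac{1}{2}$. Adding these two strict inequalities yields $P(X \leq m_1) + P(X \geq m_2) > 1$, contradicting the disjointness bound above. Hence the assumption $m_1 < m_2$ (and by symmetry $m_2 < m_1$) is false, and so $m_1 = m_2$.

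There is no real obstacle here; the lemma is an immediate consequence of the strictness in (\ref{eq:strong_median_def}) together with the disjointness of the two tail events whenever the two candidate medians differ. The only thing to take care of is to invoke the strict inequalities from the definition of uniqueness rather than the weak ones from (\ref{eq:median_def}), since with weak inequalities the argument collapses (as indeed a whole interval of weak medians can coexist, cf.\ Lemma \ref{lemma:median_not_unique}).
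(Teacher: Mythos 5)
Your proof is correct and is essentially the paper's argument in a slightly different dress: the paper assumes $m_1 < m_2$ and writes $P(X \leq m_1) \leq P(X < m_2) = 1 - P(X \geq m_2) < \frac{1}{2}$, which is just an algebraic rearrangement of your observation that the disjoint tail events cannot both have probability exceeding $\frac{1}{2}$. Both routes rest on the same strictness in (\ref{eq:strong_median_def}), so no further comment is needed.
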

\begin{proof}
The proof will proceed by showing contradiction. Let us assume that $m_1 \neq m_2$. Without losing generality we can assume that $m_1<m_2$. The case $m_1>m_2$ is symmetrical.

By the definition of the unique median (\ref{eq:strong_median_def}) for $m_2$ we have
\begin{equation}
P(X \leq m_1) \leq P(X < m_2) = 1-P(X \geq m_2)< \frac{1}{2}
\end{equation}
Thus, $m_1$ doesn't satisfy (\ref{eq:strong_median_def}) and thus it is not the unique median, showing contradiction.
\end{proof}

Lemmas \ref{lemma:median_not_unique} and \ref{lemma:unique_median_unique} explain the motivation behind calling a median satisfying (\ref{eq:strong_median_def}) the \emph{unique} median. Lemmas \ref{lemma:unique_median_in_support} and \ref{lemma:median_not_unique} state additionally that this distinction (unique/non-unique) holds in restriction to a support of a discrete random variable with a closed support, too. 

\section{Main result}

Shifting the discussion back to binomial distributions, we can see that the discriminating condition for $B(n,p)$ to have a non-unique median is having the value $k-1$ in the support with $B(k-1, n, p) = \frac{1}{2}$. Then and only then $k-1$ and $k$ are the two ends of the interval of non-unique medians. Otherwise, if there is no such $k-1$ with $B(k-1, n, p) = \frac{1}{2}$, there is the unique median in $B(n,p)$.

The above observation gives rise to examining, for each $n \geq 1$, the $n$ values $p_{n, 1}, \ldots, p_{n, n}$ defined in \cite{Kaas1980-fn} as follows:
\begin{equation}\label{eq:p_i}
    B(k-1, n, p_{n, k}) = \frac{1}{2}\text{, for } k \in \mathbb{N},\,1 \leq k \leq n
\end{equation}
Such $p_{n,k}$ exist, because for fixed $k$ and $n$, $B(k-1, n, p)$ is a continuous function of $p$ with values $B(k-1, n, p=0)=1$ and $B(k-1, n, p=1)=0$, thus $\frac{1}{2}$ must be attained by Darboux's theorem at some point $p \in (0,1)$, too. 

In other words, for $1 \leq k \leq n$, $B(n,p_{n, k})$ is the distribution with $k-1$ and $k$ as the two ends of the interval of non-unique medians. 

\begin{theorem}
    Fix $n \geq 1$. The $n$ values $p_{n, 1}, \ldots, p_{n, n}$ defined as in (\ref{eq:p_i}) are irrational, with the only exception $p_{n, \ceil{\frac{n}{2}}}=\frac{1}{2}$ for odd $n$.
\end{theorem}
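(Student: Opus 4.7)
The plan is to assume that some $p_{n,k}$ is rational and deduce that necessarily $p_{n,k}=1/2$ with $n$ odd and $k=(n+1)/2$. I will write such a rational as $p_{n,k} = a/(a+b)$ for coprime positive integers $a,b$ (so that $1 - p_{n,k} = b/(a+b)$), substitute into the defining equation $B(k-1, n, p_{n,k}) = 1/2$, and clear the common denominator $(a+b)^n$ to obtain the integer identity
\begin{equation*}
2\sum_{i=0}^{k-1}\binom{n}{i} a^i b^{n-i} = (a+b)^n.
\end{equation*}
Expanding $(a+b)^n$ by the binomial theorem and subtracting gives the symmetric split
\begin{equation*}
\sum_{i=0}^{k-1}\binom{n}{i} a^i b^{n-i} \;=\; \sum_{i=k}^{n}\binom{n}{i} a^i b^{n-i}.
\end{equation*}

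The technical heart of the proof, and the step I expect to be the main obstacle, is a divisibility analysis of this identity. On the left every summand carries the factor $b^{n-k+1}$ (because $n-i \geq n-k+1$ for $i \leq k-1$), and on the right every summand carries the factor $a^k$, so the identity takes the form $b^{n-k+1} L = a^k R$ for positive integers $L, R$. Since $\gcd(a,b)=1$ implies $\gcd(a^k,b^{n-k+1})=1$, this forces $a^k \mid L$ and $b^{n-k+1} \mid R$. Reducing $L$ modulo $a$ kills every term except the $i=0$ one, leaving $L \equiv b^{k-1} \pmod{a}$, and combined with $\gcd(a,b)=1$ this forces $a=1$. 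Symmetrically, reducing $R$ modulo $b$ leaves only the $i=n$ term, giving $R \equiv a^{n-k} \pmod{b}$ and hence $b=1$.

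With $a=b=1$ we have $p_{n,k}=1/2$, and the displayed equation collapses to $\sum_{i=0}^{k-1}\binom{n}{i} = 2^{n-1}$. Using $\binom{n}{i}=\binom{n}{n-i}$ this becomes $\sum_{i=0}^{k-1}\binom{n}{i} = \sum_{i=0}^{n-k}\binom{n}{i}$, and strict monotonicity of partial sums of positive terms forces $k-1 = n-k$, that is, $n$ odd and $k=(n+1)/2 = \ceil{\frac{n}{2}}$. Conversely, for odd $n$ and this $k$ the symmetry of $B(n,1/2)$ immediately gives $B(k-1,n,1/2) = 1/2$, confirming $p_{n,\ceil{\frac{n}{2}}}=1/2$. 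The key insight throughout is that the binomial-theorem rearrangement produces two sides with disjoint pure-power factorizations in $a$ and $b$, so coprimality of $a$ and $b$ propagates all the way down to $a=b=1$, after which parity and symmetry finish the argument cleanly.
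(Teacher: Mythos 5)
Your proof is correct, but it takes a genuinely different route from the paper. The paper first establishes structural facts about the family $p_{n,1},\ldots,p_{n,n}$ — strict monotonicity in $k$ (via computing $\frac{d}{dp}B(j,n,p)=-n\,b(j,n-1,p)$) and the symmetry $p_{n,i}=1-p_{n,n-i+1}$, which yields $p_{n,\lceil n/2\rceil}=\frac12$ for odd $n$ — and then handles only the indices $i>\lceil n/2\rceil$, where $p_{n,i}\in(\frac12,1)$: applying the Rational Root Theorem to the integer polynomial $2B(i-1,n,p)-1$, whose constant coefficient is $1$, forces any rational root to be of the form $\frac{1}{r}$, impossible in $(\frac12,1)$; the remaining indices follow by the symmetry. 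You instead clear denominators in the homogenized two-variable form with $p=\frac{a}{a+b}$, $\gcd(a,b)=1$, and extract divisibility on both sides of $\sum_{i=0}^{k-1}\binom{n}{i}a^ib^{n-i}=\sum_{i=k}^{n}\binom{n}{i}a^ib^{n-i}$: the factorization $b^{n-k+1}L=a^kR$ together with $L\equiv b^{k-1}\pmod a$ and $R\equiv a^{n-k}\pmod b$ forces $a=b=1$, and then $\sum_{i=0}^{k-1}\binom{n}{i}=2^{n-1}$ pins down $k-1=n-k$ by symmetry and strict monotonicity of partial sums (the boundary cases $k=1$ and $k=n$ are harmlessly covered since $b^0=a^0=1$). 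Your argument is uniform in $k$, needs no calculus, no localization of $p_{n,k}$ in $(\frac12,1)$, and no symmetry relation among the $p_{n,k}$; in effect you use both "ends" of the rational-root constraint at once, where the paper uses only the constant coefficient and compensates with the interval estimate. What the paper's longer route buys is the extra structural information (ordering and the reflection identity $p_{n,i}=1-p_{n,n-i+1}$), which is of independent interest; what yours buys is a shorter, self-contained, purely arithmetic proof that also derives the exceptional case $p=\frac12$, $n$ odd, $k=\lceil n/2\rceil$ directly from the same identity. The only implicit point, shared with the paper's setup, is that $p_{n,k}\in(0,1)$ (so the representation $a/(a+b)$ with positive coprime integers is available) and that the defining equation determines $p_{n,k}$ uniquely; both follow from the intermediate-value construction and strict monotonicity of $B(k-1,n,p)$ in $p$ already recorded in the paper.
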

\begin{proof}
The proof will proceed in five parts marked with Roman numerals I-V:

\textbf{I.} $p_{n, i}<p_{n, j}$ for $1 \leq i <j \leq n$:

We can see that $n \geq 2$. $B(k, n, p)$ as a binomial CDF is a strictly increasing function of $k$ with $n$ and $p$ fixed and it is a strictly decreasing function of $p$ with $n$ and $k$ fixed. To see that, fix $n \geq 2$ and $0 \leq j \leq n-1$ and calculate the derivative
\begin{equation}
\begin{split}
    \frac{d}{dp}B(j,n,p)=&\sum_{i=0}^j  \binom{n}{i} \bigg( i p^{i-1}(1-p)^{n-i}-p^i(n-i) (1-p)^{n-i-1} \bigg)=\\
    =&\sum_{i=0}^{j-1}\binom{n}{i+1}(i+1)p^i(1-p)^{n-i-1}
    - \sum_{i=0}^{j}\binom{n}{i}(n-i)p^i(1-p)^{n-i-1}
\end{split}
\end{equation}
It is easy to check that $\binom{n}{i+1}(i+1) = \binom{n}{i}(n-i) = \binom{n-1}{i}n$. It allows telescopic cancelling of respective terms in both sums and we are left only with the last term of the right sum, i.e.
\begin{equation}
    \frac{d}{dp}B(j,n,p) = -n\binom{n-1}{j}p^j(1-p)^{(n-1)-j} = -n \cdot b(j,n-1,p) < 0
\end{equation}
For $i<j$ we have
\begin{equation}
    B(j-1,n,p_{n, j}) = \frac{1}{2}=B(i-1,n,p_{n, i})<B(j-1,n,p_{n, i})
\end{equation}
So we have  $p_{n, i}<p_{n, j}$, which ends this part of the proof.

\textbf{II.} $0< p_{n, i} = 1 - p_{n, (n-i+1)}<1$ for $1 \leq i \leq n$:

First note, that if any of $p_{n,i}$ were equal 0 or 1, the distribution would degenerate and CDF in no point would be equal to $\frac{1}{2}$.

To prove the equation $p_{n, i} = 1 - p_{n, (n-i+1)}$, observe that for random variables $X_+$ with a binomial distribution $B(n,p_{n,i})$ and $X_-$ with a binomial distribution $B(n,1-p_{n,i})$ we have
\begin{equation}
    \frac{1}{2} = B(i-1,n,p_{n,i}) = P(X_+ \leq i-1) = P(X_- \geq n-(i-1))
\end{equation}
Thus,
\begin{equation}
\begin{split}
    B(n-i, n, 1-p_{n,i}) =& P(X_- \leq n-i) = P(X_- < n-i +1) =1-P(X_- \geq n-i+1) = \frac{1}{2} = \\
    =&B(n-i, n, p_{n,(n-i+1)}) 
\end{split}
\end{equation}
It ends this part of the proof.
\end{proof}

\textbf{III.} $p_{n, \ceil{\frac{n}{2}}}=\frac{1}{2}$ for odd $n$:

This is a straightforward consequence of part II., after you note that $ \ceil{\frac{n}{2}} = n - \ceil{\frac{n}{2}} +1$ for odd $n$.

\textbf{IV.} $p_{n, i}$ is irrational for $i>\ceil{\frac{n}{2}}$:

First, observe that for odd $n$, by using parts I. and III., we have $p_{n,i}>p_{n,\ceil{\frac{n}{2}}} = \frac{1}{2}$ for $i>\ceil{\frac{n}{2}}$.

For $n$ even, we use parts I. and II. to arrive to the same conclusion: $p_{n, i} > p_{n, (n-i+1)} = 1 - p_{n, i}$, for $i> \frac{n}{2}$, thus $p_{n, i} > \frac{1}{2}$ for $i> \frac{n}{2}= \ceil{\frac{n}{2}}$.

By using part II. again, to get the upper bound on $p_{n,i}$, we arrive to the conclusion, that $p_{n,i} \in (\frac{1}{2},1)$.

We observe that $p=p_{n,i}$ is the solution to the equation $2B(i-1,n,p)-1=0$ which is polynomial in relation to $p$ with all coefficients being integers. We examine the coefficient at $p^0$ after the polynomial $2B(i-1,n,p)-1$ is fully expanded:
\begin{equation}
\big[p^0\big]\Big(2B(i-1,n,p)-1\Big) = \big[p^0\big]\bigg(2 \sum_{j=0}^{i-1} \binom{n}{j}p^j(1-p)^{n-j} \bigg)-1 = \big[p^0\big]\bigg( 2 \binom{n}{0}p^0(1-p)^{n} \bigg)-1=1
\end{equation}
If $p_{n,i}$ were rational of the form $\frac{q}{r}$ with $q$ and $r$ relatively prime integers, then (by the Rational Root Theorem) $q$ would divide $\big[p^0\big]\Big(2B(i-1,n,p)-1\Big)= 1$. So $p_{n,i}$ would be rational of the form $\frac{1}{r}$. But that is impossible, because in $(\frac{1}{2},1)$ interval there are no rational numbers of the form $\frac{1}{r}$ with integer $r$. 

This finalizes this part of the proof.

\textbf{V.} $p_{n, i}$ is irrational for $i \neq \ceil{\frac{n}{2}}$ or $n$ even.

This is a straightforward consequence of parts II. and IV. It also finalizes the proof of the main result of this paper.

\bibliographystyle{unsrtnat}  
\bibliography{references}  

\end{document}